\newtheorem{theorem}{Theorem}[section]
\newtheorem{corollary}[theorem]{Corollary}
\newtheoremstyle{definition}
  {6pt}
  {6pt}
  {}
  {}
  {\bfseries}
  {.}
  {.5em}
  {}%
\theoremstyle{definition}
\newtheoremstyle{remark}
  {6pt}
  {6pt}
  {}
  {}
  {\bfseries}
  {.}
  {.5em}
  {}%
\theoremstyle{remark}
\newtheorem{remark}[theorem]{Remark}
\newtheoremstyle{example}
  {6pt}
  {6pt}
  {}
  {}
  {\bfseries}
  {.}
  {.5em}
  {}%
\theoremstyle{example}
\newtheorem{example}[theorem]{Example}
\renewcommand\@makefntext[1]{%
\setlength\parindent{1em}%
\noindent \makebox[1.8em][r]{}{#1}} \makeatother
\begin{document}
\parskip 4pt
\large \setlength{\baselineskip}{15 truept}
\setlength{\oddsidemargin} {0.5in} \overfullrule=0mm
\def\bfh{\vhtimeb}
\date{}

\title{\bf \large  ON SOME PROPERTIES OF PARTIAL SUMS  
 }
\def\b{\vntime}

\author{
 Duong Quoc Viet and Truong Thi Hong Thanh \\
\small Department of Mathematics, Hanoi National University of Education\\
\small 136 Xuan Thuy Street, Hanoi, Vietnam\\
\small Emails: duongquocviet@fmail.vnn.vn \;and\; thanhtth@hnue.edu.vn \\}
 \date{}

\maketitle \centerline{
\parbox[c]{9. cm}{
\small {\bf  ABSTRACT:} The paper gives some criteria for  partial
sums of rational number sequences  to be not 
rational functions and  to be not algebraic functions.
 As an application, we study  partial sums of some famous rational number sequences in
mathematical analysis with relevance to number theory. 
  }}
\section{Introduction}
\noindent To determine properties of  partial sums $S(n) = \sum_{i=1}^nu_i$  of
number sequences $(u_i)_{i\geqslant 1}$ is usually an interesting
problem. Although one has given different tools for this problem, but it is hard to know that  
functions for partial sums  are algebraic
functions or transcendental functions.

In this paper, we give some criteria for functions in one positive
integer variable to be not 
rational functions (see Theorem \ref{th2.1}) and to be not  algebraic
functions  (see Theorem \ref{th2.0}). In particular, we get the
following corollaries for partial sums of rational number
sequences (see Section 2).
\footnotetext{\begin{itemize}
 \item[ ]{\it Mathematics Subject
Classification}(2010): Primary  11B83.    Secondary 12B31; 11B34.
\item[ ]{\it Key words and phrases}: Partial sums; number
sequences; transcendental function.
\end{itemize}}
Let $(u_i)_{i\geqslant 1}$ be a rational number sequence. Then
$S(n) = \sum_{i=1}^nu_i$ is not a rational function in $n$ if any
one of the following conditions holds (see Corollary \ref{co2.3}):

\begin{itemize}
\item[$\mathrm{(a)}$] $\lim_{n \rightarrow \infty}S(n) = \infty$
and $\lim_{n \rightarrow \infty}\frac{S(n)}{n} = 0.$

\item[$\mathrm{(b)}$] $\lim_{n \rightarrow \infty}S(n) = 0$ and
$\lim_{n \rightarrow \infty}nS(n) = \infty.$

\item[$\mathrm{(c)}$] $\lim_{n \rightarrow \infty}S(n)$ is an
irrational number.
\end{itemize}
And if $\lim_{n \rightarrow \infty}S(n)$ is a transcendental
number then $S(n) = \sum_{i=1}^nu_i$ is a transcendental function
in $n$ (see Corollary \ref{co2.4}).

Using these criteria, we study  
 partial sums of some famous rational number sequences in
mathematical analysis with relevance to number theory
(see Section 3).

\section{Some Criteria for Partial Sums}
\noindent This section gives some criteria for functions in one
positive integer variable to be not rational functions and algebraic functions.

 Denote by $\mathbb{Q};$
 $\mathbb{C}$ the field of the rational numbers;  the field of the complex numbers,
respectively. Let $K$ be a subfield of $\mathbb{C}$; $\alpha \in
\mathbb{C}.$ Let $f(n)$ be a complex-valued function in one
positive integer variable $n.$ Then one can make the following
definitions:

\begin{itemize}

\item[$\mathrm{(i)}$] $\alpha$ is a {\it rational number} if
$\alpha \in \mathbb{Q}[i].$ And $\alpha$ is an {\it irrational
number} if $ \alpha \notin \mathbb{Q}[i].$

\item[$\mathrm{(ii)}$] $\alpha$ is  {\it algebraic} over $K$  if
there exist $c_0,\ldots, c_h \in K$ with $c_h \ne 0$ such that
$$c_h \alpha^h + c_{h-1} \alpha^{h-1} +\cdots+c_1\alpha + c_0 = 0.$$ And $\alpha$ is a {\it
transcendental number over $K$} if it is not an algebraic number over $K$.

\item[$\mathrm{(iii)}$] $\alpha$ is an {\it algebraic number } if
$\alpha$ is algebraic over $\mathbb{Q}.$ And $\alpha$ is a {\it
transcendental number } if it is not an algebraic number.
Denote by $\overline{\mathbb{Q}}$
the field of the algebraic numbers.
 \item[$\mathrm{(iv)}$] $f(n)$ is
a {\it rational function} over $K$ if $$f(n) = \frac{a_m n^m +
a_{m-1} n^{m-1}+\cdots + a_1n + a_0 }{b_d n^d + b_{d-1}
n^{d-1}+\cdots + b_1n + b_0}$$ where $a_0.\ldots, a_m; b_0,\ldots,
b_d \in K$ with $b_d \ne 0.$

\item[$\mathrm{(v)}$] $f(n)$ is an {\it algebraic function} over
$K$ if there exist polynomials in $n$ with coefficients in $K$:
$q_0(n),\ldots, q_k(n) \in K[n]$ with $q_k(n) \ne 0$ such that
$$ q_k(n)[f(n)]^k + q_{k-1}(n)[f(n)]^{k-1}+\cdots+q_1(n)[f(n)] +
q_0(n) = 0.$$
And
$f(n)$ is a {\it transcendental function}
over $K$ if it is not an algebraic function $\mathrm{over}\; K.$
\end{itemize}

Then we obtain the following results.

\begin{theorem}\label{th2.1} Let $f(n)$ be a function in one
positive integer variable $n.$ Then the following statements hold.

\begin{itemize}
\item[$\mathrm{(i)}$] If $\lim_{n \rightarrow \infty}f(n) =
\infty$ and $\lim_{n \rightarrow \infty}\frac{f(n)}{n} = 0$ then
$f(n)$ is not a rational function in $n$ over $\mathbb{C}.$

\item[$\mathrm{(ii)}$] If $\lim_{n \rightarrow \infty}f(n) = 0$
and $\lim_{n \rightarrow \infty}nf(n) = \infty$ then $f(n)$ is not
a rational function in $n$ over $\mathbb{C}.$
\item[$\mathrm{(iii)}$] If $\lim_{n \rightarrow \infty}f(n)$ is an
irrational number then $f(n)$ is not a rational function in $n$
over $\mathbb{Q}[i].$
\end{itemize}
\end{theorem}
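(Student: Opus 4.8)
The plan is to argue by contradiction in all three parts, leaning on the single elementary fact that the asymptotic behaviour of a rational function is governed entirely by comparing the degrees of its numerator and denominator. So I would suppose that $f$ is a rational function over the relevant field,
$$f(n) = \frac{a_m n^m + a_{m-1}n^{m-1} + \cdots + a_0}{b_d n^d + b_{d-1}n^{d-1} + \cdots + b_0}, \qquad a_m \neq 0,\ b_d \neq 0,$$
and derive a contradiction in each case. Since $b_d \neq 0$, the denominator is nonzero for all large $n$, so $f(n)$ is defined there and its limit as $n \to \infty$ may be analysed. Dividing top and bottom by $n^{\max(m,d)}$ yields the standard trichotomy: (T1) if $m > d$ then $f(n) \sim (a_m/b_d)\,n^{m-d} \to \infty$; (T2) if $m = d$ then $f(n) \to a_m/b_d$, a finite nonzero value; (T3) if $m < d$ then $f(n) \to 0$. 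Everything then follows by matching each hypothesis against this trichotomy.

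For (i) I would observe that $\lim_{n\to\infty} f(n) = \infty$ excludes (T2) and (T3), forcing $m > d$; but then $f(n)/n \sim (a_m/b_d)\,n^{m-d-1}$ with exponent $m-d-1 \geq 0$, so $f(n)/n$ tends to the nonzero constant $a_m/b_d$ when $m = d+1$ and to $\infty$ when $m > d+1$, in neither case to $0$. For (ii) the hypothesis $\lim_{n\to\infty} f(n) = 0$ forces $m < d$, whence $nf(n) \sim (a_m/b_d)\,n^{m-d+1}$ with exponent $m-d+1 \leq 0$; this tends to $a_m/b_d$ when $m = d-1$ and to $0$ when $m < d-1$, so it cannot tend to $\infty$. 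For (iii), working with coefficients in $\mathbb{Q}[i]$, the existence of a finite (irrational) limit excludes (T1), so $m \leq d$ and the limit is either $0$ or $a_m/b_d$; both lie in $\mathbb{Q}[i]$ and so are rational in the paper's sense, contradicting irrationality.

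I expect no conceptual obstacle here; the proof is genuinely elementary. The one place to be careful is the boundary degrees --- $m = d+1$ in (i), $m = d-1$ in (ii), and $m = d$ in (iii) --- where the contradiction arises not because some term blows up or vanishes, but because the limit locks onto the specific nonzero value $a_m/b_d$. Keeping accurate track of these leading coefficients, and noting in parts (i) and (ii) that $\lim f(n) = \infty$ is to be read as $|f(n)| \to \infty$ so that the trichotomy still applies verbatim over $\mathbb{C}$, is the only real bookkeeping the argument demands.
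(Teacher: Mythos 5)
Your proof is correct and follows essentially the same route as the paper: assume $f$ is rational, compare the degrees of numerator and denominator against each limit hypothesis, and derive a contradiction (the paper phrases parts (i)--(ii) as degree inequalities, $\deg P \ge \deg Q + 1 = \deg nQ > \deg P$, while you make the underlying leading-term trichotomy explicit, but this is the same argument). Your handling of part (iii) --- noting that a finite limit forces $m \le d$ and hence a limit of $0$ or $a_m/b_d$, both in $\mathbb{Q}[i]$ --- also matches the paper's reasoning, which uses irrationality to rule out $0$ and then equates the limit with the ratio of leading coefficients.
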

\begin{proof} The proof of (i): If $f(n)$ is
a rational function over $\mathbb{C}$ then $f(n) =
\frac{P(n)}{Q(n)}$ here $P(n)$ and $Q(n)$ are two polynomials in
$n$ over $\mathbb{C}.$ Then on the one hand, since $\lim_{n
\rightarrow \infty}f(n) = \infty,$ it follows that $\deg P(n)  >
\deg Q(n).$ On the other hand, since $\lim_{n \rightarrow
\infty}\frac{f(n)}{n} = \lim_{n \rightarrow
\infty}\frac{P(n)}{nQ(n)} = 0,$ $\deg nQ(n)  > \deg P(n).$ From
the above facts we get $$\deg P(n)  \ge \deg Q(n)+1= \deg nQ(n) >
\deg P(n),$$ contradiction.  Arguing similarly with the proof of
(i), we obtain (ii). The proof of (iii): Suppose that $f(n)$ is a
rational function over $\mathbb{Q}[i].$ Since $\lim_{n \rightarrow
\infty}f(n)= c$ is an irrational number, $c \ne 0.$ Consequently,
 we have $$f(n) = \frac{a_m n^m + a_{m-1} n^{m-1}+\cdots + a_1n +
a_0 }{b_m n^m + b_{m-1} n^{m-1}+\cdots + b_1n + b_0}$$ where
$a_0,\ldots, a_m; b_0,\ldots, b_m \in \mathbb{Q}[i]$ with $a_mb_m
\ne 0.$ Note that
$$\lim_{n \rightarrow \infty}\frac{a_m n^m + a_{m-1} n^{m-1}+\cdots + a_1n +
a_0 }{b_m n^m + b_{m-1} n^{m-1}+\cdots + b_1n + b_0}=
\frac{a_m}{b_m} \in \mathbb{Q}[i].$$ So $c \in \mathbb{Q}[i].$
This contradiction shows that $f(n)$ is not a rational function in
$n$ over $\mathbb{Q}[i].$
\end{proof}

\begin{remark} The reverse of Theorem \ref{th2.1} does not hold. For (i):  $f(n) = n ^{\frac{4}{3}}$
is not a rational function and $\lim_{n \rightarrow \infty}f(n) = \infty,$ but $\lim_{n \rightarrow
\infty}\frac{f(n)}{n} = \infty \ne 0.$ For (ii):
 $g(n)= n ^{-\frac{5}{4}}$ is not a rational function for which $\lim_{n \rightarrow
\infty}g(n)= 0,$ but $\lim_{n \rightarrow
\infty}ng(n)= 0.$ For (iii): $q(n) = 2^{-n}$ is not a rational function, but
$\lim_{n \rightarrow \infty}q(n) = 0 \in \mathbb{Q}.$
\end{remark}

Let $(u_i)_{i\geqslant 1}$ be a rational number sequence. Then
$S(n) = \sum_{i=1}^nu_i$ is a function in $n.$  Hence from Theorem
\ref{th2.1} we immediately get the
following  corollary.

\begin{corollary}\label{co2.3} Let $(u_i)_{i\geqslant 1}$ be a rational number sequence. Then
$S(n)$ is not a rational function in $n$ over $\mathbb{Q}[i]$ if
any one of the following conditions holds:

\begin{itemize}
\item[$\mathrm{(i)}$] $\lim_{n \rightarrow \infty}S(n) = \infty$
and $\lim_{n \rightarrow \infty}\frac{S(n)}{n} = 0.$

\item[$\mathrm{(ii)}$] $\lim_{n \rightarrow \infty}S(n) = 0$ and
$\lim_{n \rightarrow \infty}nS(n) = \infty.$

\item[$\mathrm{(iii)}$] $\lim_{n \rightarrow \infty}S(n)$ is an
irrational number.
\end{itemize}

\end{corollary}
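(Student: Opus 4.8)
The plan is to recognize that the corollary is an immediate specialization of Theorem \ref{th2.1} to the particular function $f(n) = S(n)$. Since $(u_i)_{i \geqslant 1}$ is a sequence of rational numbers, the partial sum $S(n) = \sum_{i=1}^n u_i$ is a well-defined complex-valued (indeed rational-valued) function of the single positive integer variable $n$, which is exactly the class of objects to which Theorem \ref{th2.1} applies. So I would set $f = S$ throughout and invoke the three parts of the theorem one at a time.

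For case (iii) there is nothing further to do: the hypothesis that $\lim_{n \to \infty} S(n)$ is an irrational number is precisely the hypothesis of Theorem \ref{th2.1}(iii) with $f = S$, whose conclusion is that $f$ is not a rational function over $\mathbb{Q}[i]$. This yields the claim directly.

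For cases (i) and (ii) the hypotheses again match those of Theorem \ref{th2.1}(i) and (ii) respectively, but there the conclusion is non-rationality over the larger field $\mathbb{C}$ rather than over $\mathbb{Q}[i]$. The one point requiring a line of justification is therefore the passage from $\mathbb{C}$ to $\mathbb{Q}[i]$. I would observe that every rational function over $\mathbb{Q}[i]$ is in particular a rational function over $\mathbb{C}$, since $\mathbb{Q}[i] \subseteq \mathbb{C}$ and the defining expression in part (iv) of the definitions only constrains where the coefficients lie. Hence the class of rational functions over $\mathbb{Q}[i]$ sits inside the class of rational functions over $\mathbb{C}$, so that ``not a rational function over $\mathbb{C}$'' immediately implies ``not a rational function over $\mathbb{Q}[i]$.'' Applying Theorem \ref{th2.1}(i) or (ii) then closes each case.

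I expect no genuine obstacle, since the statement is a direct corollary of the theorem; the only step needing an explicit remark is the field inclusion just described, which is what permits the $\mathbb{C}$-conclusions of parts (i) and (ii) to be sharpened to the $\mathbb{Q}[i]$-conclusions asserted in the corollary.
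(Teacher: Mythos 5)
Your proof is correct and follows the same route as the paper, which simply derives the corollary as an immediate specialization of Theorem \ref{th2.1} with $f(n) = S(n)$. Your explicit note that non-rationality over $\mathbb{C}$ implies non-rationality over $\mathbb{Q}[i]$ (via the inclusion $\mathbb{Q}[i] \subseteq \mathbb{C}$) is a detail the paper leaves implicit, and it is exactly the right observation to make the deduction airtight.
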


\begin{remark} In general,  the reverse of Corollary \ref{co2.3} does not hold. Indeed, for (i): 
Choose $(u_i= i.i!)_{i\geqslant 1},$ then
$$S(n) = \sum_{i=1}^n i.i! = (n +1)! -1$$ 
is not a rational function and $\lim_{n \rightarrow \infty}S(n) = \infty,$ but $\lim_{n \rightarrow
\infty}\frac{S(n)}{n} = \infty \ne 0.$ For (ii): 
Let $(u_i)_{i\geqslant 1}$ be a rational number sequence
defined by  $u_1 = -\frac{1}{2}$
and $u_i = \frac{i}{(i+1)!}$ for all $i \ge 2.$
Then 
 $S(n)=\sum_{i=1}^n\frac{i}{(i+1)!} =  - \frac{1}{(n+1)!} $ is not a rational function for which $\lim_{n \rightarrow
\infty}S(n)= 0,$ but $\lim_{n \rightarrow
\infty}nS(n)= 0 \ne \infty.$ For (iii): $S(n) = \sum_{i=1}^n2^{-i} = 1-2^{-n}$ is not a rational function, but
$\lim_{n \rightarrow \infty}S(n) = 1 \in \mathbb{Q}.$
\end{remark}

\begin{theorem}\label{th2.0} Let $K$ be a subfield of $\mathbb{C},$ and let $f(n)$ be a function in one
positive integer variable $n.$  Assume that $\lim_{n
\rightarrow \infty}f(n)$ is a transcendental number over $K$. Then $f(n)$
is a transcendental function in $n$ over $K$.
\end{theorem}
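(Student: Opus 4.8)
The plan is to argue by contradiction, working directly from the definition of an algebraic function given in (v). Suppose $f(n)$ were algebraic over $K$. Then there exist polynomials $q_0(n),\ldots,q_k(n)\in K[n]$ with $q_k(n)\ne 0$ such that
$$q_k(n)[f(n)]^k+q_{k-1}(n)[f(n)]^{k-1}+\cdots+q_1(n)[f(n)]+q_0(n)=0$$
for every positive integer $n$. Write $c=\lim_{n\to\infty}f(n)$; since $c$ is assumed to be a transcendental number over $K$, in particular $c$ is a genuine (finite) complex number, so $f(n)\to c$ and hence $[f(n)]^i\to c^i$ for each $i$ by continuity. The goal is to pass to the limit in the relation above and extract from it a nontrivial polynomial equation over $K$ satisfied by $c$, which would contradict the transcendence of $c$.

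The main obstacle is that the coefficients $q_i(n)$ are polynomials in $n$, not constants, so several of them blow up as $n\to\infty$ and one cannot take the limit of each summand directly. To get around this I would normalize: let $d=\max\{\deg q_i : q_i\ne 0\}$, which is well defined because $q_k\ne 0$, and divide the whole relation by $n^d$. Then each normalized coefficient $q_i(n)/n^d$ converges, as $n\to\infty$, to a scalar $\lambda_i\in K$, namely the coefficient of $n^d$ in $q_i(n)$ (which is the leading coefficient of $q_i$ when $\deg q_i=d$ and is $0$ otherwise). Crucially, since the maximum degree $d$ is attained by at least one $q_j$, the corresponding $\lambda_j$ is nonzero, so not all the $\lambda_i$ vanish.

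Passing to the limit in the normalized identity $\sum_{i=0}^k \big(q_i(n)/n^d\big)[f(n)]^i=0$ then yields $\sum_{i=0}^k \lambda_i c^i=0$, a polynomial relation with coefficients $\lambda_i\in K$ that are not all zero. This exhibits $c$ as algebraic over $K$, contradicting the hypothesis that $\lim_{n\to\infty}f(n)$ is transcendental over $K$. Hence $f(n)$ cannot be algebraic over $K$, i.e. it is a transcendental function over $K$, as claimed. The only points requiring a little care are verifying that the limits $q_i(n)/n^d\to\lambda_i$ and $[f(n)]^i\to c^i$ may be combined termwise (a finite sum of products of convergent sequences) and that at least one $\lambda_i$ is nonzero; both are routine once the normalization by $n^d$ is in place.
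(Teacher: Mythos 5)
Your proposal is correct and follows essentially the same route as the paper's own proof: argue by contradiction, normalize the algebraic relation by $n^{d}$ where $d$ is the maximal degree of the coefficient polynomials, pass to the limit termwise, and obtain a nontrivial polynomial relation over $K$ satisfied by $\lim_{n\to\infty}f(n)$. If anything, you are slightly more explicit than the paper in justifying why at least one limiting coefficient $\lambda_j$ is nonzero (the paper asserts this without comment), so there is nothing to correct.
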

\begin{proof}
Arguing by contradiction, suppose that $f(n)$ is a algebraic
function over $K.$  Then there exist
polynomials in $n$ with  coefficients in over $K$:
 $q_0(n),\ldots, q_k(n) \in K[n]$ with
$q_k(n) \ne 0$ such that
$$ q_k(n)[f(n)]^k + q_{k-1}(n)[f(n)]^{k-1}+\cdots+q_1(n)[f(n)] +
q_0(n) = 0.$$ Set $d = \max \{\deg q_0(n),\ldots, \deg q_k(n)\}.$
 It is easily seen that $\lim_{n \rightarrow
\infty}\frac{q_i(n)}{n^d} \in  K$ for all $0
\le i \le k.$ Set $\lim_{n \rightarrow \infty}f(n)= \alpha$ and
$\lim_{n \rightarrow \infty}\frac{q_i(n)}{n^d} = c_i $ for all $0
\le i \le k.$ Then $\alpha$ is a transcendental number over $K$ and
$c_0,\ldots, c_k \in K$ are not all zero.
Since
\begin{align*}
 0 &= \lim_{n \rightarrow \infty}\big( [\frac{q_k(n)}{n^d}][f(n)]^k +
[\frac{q_{k-1}(n)}{n^d}][f(n)]^{k-1}+\cdots+[\frac{q_1(n)}{n^d}][f(n)]
+ [\frac{q_0(n)}{n^d}]\big)\\ &= \lim_{n \rightarrow \infty}
[\frac{q_k(n)}{n^d}]\lim_{n \rightarrow \infty}[f(n)]^k + \lim_{n
\rightarrow \infty}[\frac{q_{k-1}(n)}{n^d}]\lim_{n \rightarrow
\infty}[f(n)]^{k-1}\\
&\;\;\;\;\;\;\;\;\;\;\;\;\;+\cdots+\lim_{n \rightarrow
\infty}[\frac{q_1(n)}{n^d}]\lim_{n \rightarrow \infty}[f(n)] +
\lim_{n \rightarrow \infty}[\frac{q_0(n)}{n^d}]\\
&= c_k \alpha^k + c_{k-1} \alpha^{k-1}+\cdots+c_1\alpha + c_0,
\end{align*}
we obtain  $c_k \alpha^k + c_{k-1} \alpha^{k-1}+\cdots+c_1\alpha +
c_0 = 0$ for $c_0,\ldots, c_k \in K$, not all
zero; i.e., $\alpha$ is algebraic over $K$.
  This contradiction concludes the proof.
\end{proof}

Remember that $\overline{\mathbb{Q}}$ is algebraic over
$\mathbb{Q},$ $\alpha \in \mathbb{C}$ is algebraic over $\overline{\mathbb{Q}}$ if and only if  $\alpha$ is an algebraic number  (see e.g. \cite{SL, PM}). Consequently, we immediately have the following consequence by Theorem \ref{th2.0}.    
\begin{corollary}\label{co2.2} Let  $f(n)$ be a function in one
positive integer variable $n.$  Assume that $\lim_{n
\rightarrow \infty}f(n)$ is a transcendental number. Then $f(n)$
is a transcendental function in $n$ over $\overline{\mathbb{Q}}$.
\end{corollary}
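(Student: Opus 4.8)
The plan is to apply Theorem \ref{th2.0} with the specific choice $K = \overline{\mathbb{Q}}$. The only hypothesis that needs checking is that the limit $\alpha := \lim_{n \rightarrow \infty} f(n)$, which is assumed to be a transcendental number (that is, transcendental over $\mathbb{Q}$), is in fact transcendental over the larger field $\overline{\mathbb{Q}}$; once this is in hand, the conclusion follows by a direct invocation of the theorem.

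First I would record the field-theoretic fact recalled just before the statement: an element $\alpha \in \mathbb{C}$ is algebraic over $\overline{\mathbb{Q}}$ if and only if it is an algebraic number, i.e. algebraic over $\mathbb{Q}$. This rests on the transitivity of algebraic dependence: every element of $\overline{\mathbb{Q}}$ is algebraic over $\mathbb{Q}$, so any $\alpha$ satisfying a polynomial relation with coefficients in $\overline{\mathbb{Q}}$ in fact lies in a finite extension of $\mathbb{Q}$ generated by those finitely many algebraic coefficients together with $\alpha$, which forces $\alpha$ to be algebraic over $\mathbb{Q}$.

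Taking the contrapositive, since $\alpha$ is transcendental, it is not an algebraic number, hence not algebraic over $\overline{\mathbb{Q}}$; in other words $\alpha$ is transcendental over $\overline{\mathbb{Q}}$. At this point the hypothesis of Theorem \ref{th2.0} is satisfied with $K = \overline{\mathbb{Q}}$, and that theorem yields at once that $f(n)$ is a transcendental function in $n$ over $\overline{\mathbb{Q}}$, completing the proof.

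The argument is short: the single substantive ingredient is the passage from \emph{transcendental over $\mathbb{Q}$} to \emph{transcendental over $\overline{\mathbb{Q}}$}, and everything else is a formal application of Theorem \ref{th2.0}. Since the equivalence characterizing algebraicity over $\overline{\mathbb{Q}}$ has already been recalled in the excerpt, no genuine obstacle remains; the only point demanding attention is to state the transitivity step cleanly rather than to prove anything new.
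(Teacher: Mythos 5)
Your proposal is correct and follows exactly the paper's own route: the paper likewise deduces the corollary from Theorem \ref{th2.0} with $K=\overline{\mathbb{Q}}$, using the fact (which it cites to \cite{SL, PM} rather than proving) that $\alpha\in\mathbb{C}$ is algebraic over $\overline{\mathbb{Q}}$ if and only if $\alpha$ is an algebraic number, so transcendence over $\mathbb{Q}$ gives transcendence over $\overline{\mathbb{Q}}$. The only difference is cosmetic: you sketch the transitivity argument behind that fact, while the paper simply invokes it.
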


\begin{remark}
 $F(n)= 1+3^{-n}$ is
a transcendental function in $n$ over $\overline{\mathbb{Q}}$ 
which tends to $1$  
(an algebraic number). Hence the reverse of Corollary \ref{co2.2} does not hold.
\end{remark}

Now, assume that $(u_i)_{i\geqslant 1}$ is a rational number sequence and  
$S(n) = \sum_{i=1}^nu_i$. Then $S(n)$ is a function in $n.$  Hence as an immediate consequence of Corollary \ref{co2.2} we  obtain the
following  result.

\begin{corollary}\label{co2.4} Let $(u_i)_{i\geqslant 1}$ be a rational number
sequence.  Assume that  $\lim_{n \rightarrow \infty}S(n)$ is a
transcendental number. Then $S(n) = \sum_{i=1}^nu_i$ is  a
transcendental function in $n$ over $\overline{\mathbb{Q}}$.
 \end{corollary}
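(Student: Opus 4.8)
The plan is to apply Corollary~\ref{co2.2} directly, recognizing that the partial sum $S(n)=\sum_{i=1}^n u_i$ is nothing but a particular instance of a complex-valued function in one positive integer variable $n$. First I would set $f(n) = S(n)$ and check that this $f$ genuinely qualifies as a function in the single positive integer variable $n$: since $(u_i)_{i\geqslant 1}$ is a rational number sequence, each partial sum $S(n)$ is a well-defined (indeed rational) complex number for every $n\geqslant 1$, so $f(n)=S(n)$ meets the standing definition of such a function used throughout Section~2.

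Next I would invoke the hypothesis of the corollary, namely that $\lim_{n\rightarrow\infty}S(n)$ is a transcendental number. Under the identification $f(n)=S(n)$, this is precisely the requirement that $\lim_{n\rightarrow\infty}f(n)$ be transcendental, which is exactly the premise of Corollary~\ref{co2.2}. Applying Corollary~\ref{co2.2} to $f(n)=S(n)$ then yields immediately that $S(n)$ is a transcendental function in $n$ over $\overline{\mathbb{Q}}$, which is the assertion to be proved.

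I do not expect any real obstacle here: the statement is a direct specialization of Corollary~\ref{co2.2} to the case in which the function $f$ arises as the partial-sum function of a rational sequence. The only point requiring even nominal verification is the formal observation that $S(n)$ is a function in one positive integer variable, and this is transparent from the very definition of a partial sum. Consequently the proof reduces to a single sentence invoking Corollary~\ref{co2.2}.
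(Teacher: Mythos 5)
Your proposal is correct and matches the paper's own derivation exactly: the paper likewise observes that $S(n)$ is a function in one positive integer variable and then cites Corollary~\ref{co2.2} to conclude immediately. Nothing further is needed.
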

\begin{remark} The reverse of Corollary \ref{co2.4} does not hold. Indeed, now we 
choose $(u_i= 3^{-i})_{i\geqslant 1}$, then 
$S(n) = \sum_{i=1}^n3^{-i} = \frac{1}{2}(1- 3^{-n})$ is  a
transcendental function in $n$ over $\overline{\mathbb{Q}},$ 
however $\lim_{n \rightarrow \infty}S(n)= \frac{1}{2}$ is an algebraic number.
\end{remark}
\section{ Some Applications}
\noindent In this section, we give some applications of Section 2
for partial sums of famous rational number sequences in
mathematical analysis with relevance to number theory.
\begin{example}
  Let $a$ and $b$ be non-negative
integers with $a \ne 0$. Then  $$S(n) =
\sum_{j=1}^n\frac{1}{aj+b}$$ is not a rational function in $n$
over $\mathbb{Q}.$
\end{example}

\begin{proof} Set $c = \max \{a, b\}.$ Since $$S(n) = \sum_{j=1}^n\frac{1}{aj+b}\ge
\sum_{j=1}^n\frac{1}{cj+c} =
\frac{1}{c}\sum_{j=1}^n\frac{1}{j+1}$$ and $\lim_{n \rightarrow
\infty}\sum_{j=1}^n\frac{1}{j+1} = \infty$ (see e.g. \cite{AK}),
it follows that $\lim_{n \rightarrow \infty}S(n) = \infty.$ Now,
we will prove that
$$\sum_{j=1}^n\frac{1}{j} < \sqrt{2n+1}$$ by induction on $n.$ The
result is true for $n=1$ since $1 < \sqrt{3}.$ Next assume  that
$n>1.$ By the inductive assumption  we have
$$\sum_{j=1}^n\frac{1}{j} = \sum_{j=1}^{n-1}\frac{1}{j}+\frac{1}{n}
< \sqrt{2(n-1)+1}+\frac{1}{n}.$$ Since
 $\sqrt{2n+1}-\sqrt{2(n-1)+1} =
 \frac{2}{\sqrt{2n+1}+\sqrt{2(n-1)+1}},$ it follows that
$$\sqrt{2(n-1)+1}+\frac{1}{n} < \sqrt{2n+1}$$ if and only if $2n >
\sqrt{2n+1}+\sqrt{2(n-1)+1}.$ This is equivalent to $4n^3(n-2)+1
> 0.$ Remember that $n \ge 2,$  $4n^3(n-2)+1
> 0.$
 So $2n >
\sqrt{2n+1}+\sqrt{2(n-1)+1}$ for all $n \ge 2.$    Induction is
complete. We obtain $S(n) \le \sum_{j=1}^n\frac{1}{j} <
\sqrt{2n+1}.$ From this it follows that $\lim_{n \rightarrow
\infty}\frac{S(n)}{n} = 0.$ Thus, $S(n)$ is not a rational
function over $\mathbb{Q}$ by Corollary \ref{co2.3}(i).
\end{proof}

\noindent{\bf Note 1:} In mathematics, the $n$-th harmonic number
is the sum of the reciprocals of the first $n$ natural numbers:
$H_n = \sum_{j=1}^n\frac{1}{j}.$ Example 3.1 showed that $H_n$ is
not a rational function in $n$ over $\mathbb{Q}.$       
Euler found in
1734 that $\lim_{n \rightarrow \infty}[H_n-\ln n]$ is a constant
called the Euler's constant and one denoted by $\gamma$ this
constant (see e.g. \cite {EM,JS1, WA}). $\gamma = 0.577215665...$
has not yet been proven to be transcendental or even irrational.
Sondow  gave  in 2002 \cite{JS2} criteria for irrationality of
$\gamma.$ Now, we  need to talk more about $H_n.$ 
Denote by  $\psi(x)$ the digamma function, that is, the logarithmic derivative of Euler's $\Gamma$-function. 
Then we have $H_n = \psi(n+1)+ \gamma$ (see e.g. \cite{MN}).
By Murty and Saradha in 2007 \cite[Theorem 1] {MN},    $\psi(x) + \gamma$ takes transcendental values at infinitely many   $x$  rational. Consequently, $\psi(x) + \gamma$ is a transcendental function over $\overline{\mathbb{Q}}.$ Hence we would like to     give a conjecture that 
"$H_n = \psi(n+1)+ \gamma$ {\it is a transcendental function over $\overline{\mathbb{Q}}$"}.

\begin{example} 
Let $(u_n)_{n \ge 1}$ be a sequence
  defined by $u_1 = 1$ and $$u_{i+1} = \frac{u_i
  +\frac{2}{u_i}}{2}$$ for all $i \ge 1.$
Then the function $F(n) = u_n$ in $n$ is not a rational function
over $\mathbb{Q}.$
\end{example}
\begin{proof} Since $u_{i+1} = \frac{u_i
  +\frac{2}{u_i}}{2} \ge \sqrt{u_i.\frac{2}{u_i}}=\sqrt{2} $ for all $i \ge
  1,$ it follows that
$$u_{i+1} = \frac{u_i
  +\frac{2}{u_i}}{2} \le \frac{u_i +\sqrt{2}}{2} \le u_i$$ for all $i \ge
  2.$ So $(u_n)_{n \ge 1}$ is convergent. Hence there  exists $\lim_{n \rightarrow
  \infty}u_n$.
  Set $$\lim_{n \rightarrow \infty}u_n
  =x.$$ Then $x = \frac{x
  +\frac{2}{x}}{2}$ since $u_{i+1} = \frac{u_i
  +\frac{2}{u_i}}{2}.$ Consequently,  $x = \sqrt{2}.$
   Since $\lim_{n \rightarrow \infty}u_n = \sqrt{2}$  is
an irrational number, the function $F(n) = u_n$ in $n$ is not a
rational function over $\mathbb{Q}$ by Theorem \ref {th2.1}(iii).

\end{proof}

\begin{example}
$F(n) =
\sum_{j=1}^n\frac{(-1)^{j-1}}{j};$ $L(n) =
\sum_{j=1}^n\frac{(-1)^{j-1}}{2j-1};$ $S(n) =
\sum_{j=1}^n\frac{1}{j!}$ are  transcendental functions in $n$
over $\overline{\mathbb{Q}}.$ 
\end{example}

\begin{proof} Remember that  $\sum_{j=1}^\infty\frac{(-1)^{j-1}}{j}= \ln
2$ (see e.g. \cite{AK}).
 So $\lim_{n \rightarrow \infty} F(n) =
 \ln 2$.   Since $2 \in \mathbb{Q}\backslash\{0, 1\}$, $\ln 2$  is  transcendental  by the
Hermite-Lindemann theorem (see e.g. \cite[Appendix 1] {SL}). Hence
$F(n)$ is
 transcendental over $\overline{\mathbb{Q}}$ by Corollary
\ref{co2.4}.
  By the Leibnitz formula for $\pi,$ we have
$$\lim_{n \rightarrow \infty} L(n) =
\sum_{j=1}^\infty\frac{(-1)^{j-1}}{2j-1} = \frac{\pi}{4}$$ (see
e.g. \cite{BD, RG, AK, WD}).  Since $\pi$ is a transcendental
number by the Lindemann  theorem (see e.g. \cite{BA, SL, MK}),
$L(n)$ is a transcendental function over $\overline{\mathbb{Q}}$ by Corollary
\ref{co2.4}.
 Since $$1+\lim_{n \rightarrow \infty} S(n) = \sum_{j=0}^\infty\frac{1}{j!} =
e$$(see e.g. \cite{AK}) is a transcendental number by the Hermite
theorem (see e.g. \cite{BA,SL, MK}), $$S(n) =
\sum_{j=1}^n\frac{1}{j!}$$ is a transcendental function in $n$
over $\overline{\mathbb{Q}}$ by Corollary \ref{co2.4}. \end{proof}

\begin{example} 
$U(n) = \sum_{j=1}^n\frac{1}{j^2}$
is a transcendental function in $n$ over $\overline{\mathbb{Q}}.$
\end{example}

\begin{proof}
Recall that the Basel problem asks for the precise summation of
the reciprocals of the squares of the natural numbers, i.e.,  the
precise sum of the infinite series:
$$\sum_{j=1}^\infty\frac{1}{j^2}.$$ This problem is a famous problem
in mathematical analysis with relevance to number theory, first
posed by Pietro Mengoli in 1644 and solved by Leonhard Euler in
1735  that $\sum_{j=1}^\infty\frac{1}{j^2} = \frac{\pi^2}{6}$ (see
e.g. \cite {EM, WA}). So $$\lim_{n \rightarrow \infty}U(n) =
\sum_{j=1}^\infty\frac{1}{j^2} = \frac{\pi^2}{6}.$$   Since $\pi$
is a transcendental number by the Lindemann theorem (see e.g.
\cite{BA, SL, MK}), $\frac{\pi^2}{6}$ is also a transcendental
number. Hence $U(n) = \sum_{j=1}^n\frac{1}{j^2}$ is a
transcendental function in $n$ over $\overline{\mathbb{Q}}$ by Corollary
\ref{co2.4}.
\end{proof}

\noindent{\bf Note 2:} Recall that $\psi(x)$ is the digamma function, that is, the logarithmic derivative of Euler's $\Gamma$-function. Then it is known that $\sum_{j=1}^n\frac{1}{j^2}= \frac{\pi^2}{6}-\psi'(n+1)$
 (see e.g. \cite{MN}).
 Hence $$\lim_{n \rightarrow \infty}[\frac{\pi^2}{6}-\psi'(n)] =
\sum_{j=1}^\infty\frac{1}{j^2} = \frac{\pi^2}{6}.$$  
So $\psi'(n)-\frac{\pi^2}{6}$ is a
transcendental function in $n$ over $\overline{\mathbb{Q}}$ 
by Corollary \ref{co2.2}. Now, since $\psi'(n+1)= \frac{\pi^2}{6}- \sum_{j=1}^n\frac{1}{j^2},$ it follows that
$\psi'(k)$ takes transcendental values for all $k$ positive
integer. 
Hence $\psi'(n)$ is a
transcendental function in $n$ over $\overline{\mathbb{Q}}.$

\end{document}